\newtheorem{theorem}{Theorem}
\newtheorem{remark}{Remark}
\def\BibTeX{{\rm B\kern-.05em{\sc i\kern-.025em b}\kern-.08em
    T\kern-.1667em\lower.7ex\hbox{E}\kern-.125emX}}
\begin{document}

\newcommand{\SWFnote}[1]{\textcolor{red}{SWF: #1}}

\title{
% Mean-field Control Barrier Functions for Swarm Tracking and Avoidance*\\
Mean-Field Control Barrier Functions:
\\
A Framework for Real-Time Swarm Control
% {\footnotesize \textsuperscript{*}Note: Sub-titles are not captured in Xplore and
% should not be used}
\thanks{This work was partially funded by NSF DMS award 2110745.}
\thanks{The authors contributed equally.}
}

\author{\IEEEauthorblockN{Samy Wu Fung}
\IEEEauthorblockA{\textit{Department of Applied Mathematics and Statistics} \\
% \textit{Department of Computer Science}\\
\textit{Colorado School of Mines}\\
Golden, USA \\
swufung@mines.edu}
\and
\IEEEauthorblockN{Levon Nurbekyan}
\IEEEauthorblockA{\textit{Department of Mathematics} \\
\textit{Emory University}\\
Atlanta, USA \\
lnurbek@emory.edu}
}

\maketitle

\begin{abstract}
Control Barrier Functions (CBFs) are an effective methodology to ensure safety and performative efficacy in \emph{real-time} control applications such as power systems, resource allocation, autonomous vehicles, robotics, etc. 
This approach ensures safety independently of the high-level tasks that may have been pre-planned off-line. 
For example, CBFs can be used to guarantee that a vehicle will remain in its lane.
However, when the number of agents is large, computation of CBFs can suffer from the curse of dimensionality in the multi-agent setting.
In this work, we present Mean-field Control Barrier Functions (MF-CBFs), which extends the CBF framework to the mean-field (or swarm control) setting.
The core idea is to model a population of agents as probability measures in the state space and build corresponding control barrier functions. Similar to traditional CBFs, we derive safety constraints on the (distributed) controls but now relying on the differential calculus in the space of probability measures.
\end{abstract}

\begin{IEEEkeywords}
real-time control, safety, optimal control, barrier functions, mean-field, swarm control, robotics
\end{IEEEkeywords}

\section{Introduction}

\begin{comment}
\begin{itemize}
    \item what is control
    \item importance of real-time control
    \item brief descriptions of barrier functions (citations)
    \item challenges arising from numerous agents
    \item our contribution with a brief primer on mean field theory
\end{itemize}    
\end{comment}

Control problems are ubiquitous in applications, including aerospace engineering, robotics, economics, finance, power systems management, etc. Typically, one has a system that can be manipulated by applying a control, and the goal is to drive the system to certain states, maintaining suitable constraints and acting as economically as possible.

When the constraints are known ahead of time, one can solve for controls that maintain these constraints offline. However, there are numerous situations where some constraints are unknown before deployment and must be dealt with online. 
Examples of such constraints include avoiding an unexpected obstacle or maintaining a certain distance from an agent with unknown dynamics, e.g., pedestrians.

For safety-critical applications, real-time computation of effective constraint-maintaining controllers is crucial. Control Barrier Functions is an effective framework for computing such controllers~\cite{ames2016control,ames2019control}. In short, one represents the state constraints as a sublevel set of a suitable function, which then yields the set of safe (constraint-maintaining) controls via differential inequality. Hence, one can replace the nominal control by the closest possible safe control.

The CBF methodology is appealing due to its theoretical guarantees, local nature, and computational benefits. 
Indeed, the set of safe controls at a given state depends only on the data at the current state. 
Additionally, the set of safe controls is convex for control-affine systems, and state-of-the-art convex optimization algorithms are applicable for fast computations of safe controllers. 
Finally, no computation is necessary when the nominal control is safe.

In this paper, we extend the CBF methodology to infinite-dimensional control problems in the space of probability measures. 
Such control problems are often called \textit{mean-field control problems} as one aims to control distributions of states rather than a single state~\cite{lasry2006jeux, lasry2007mean, fornasier2014mean, gomes2014mean, ruthotto2020machine,lin2021alternating}. Hence, we call the framework \textit{Mean-field Control Barrier Functions (MF-CBFs)}.

The mean-field framework is an efficient way of modeling multi-agent systems~\cite{lasry2006jeux, lasry2007mean, fornasier2014mean, gomes2014mean, ruthotto2020machine,lin2021alternating}. Indeed, the dynamics of a swarm in a state space are equivalent to the dynamics of the empirical distribution of the swarm in the space of probability measures. Modeling the swarm behavior via mean-field framework has several benefits. First, the mathematical analysis is performed in the space of probability measures, which is \emph{independent of the swarm size} unlike the product space of the joint state of the swarm. Second, instead of searching for individual controls, one can search for a common (distributed) control in a feedback form, significantly reducing the problem dimension. 
See~\cite{onken2020neural, onken2022neural,bansal2020provably,bansal2021deepreach,chen2020guaranteed,zhang2024gcbf} for the challenges occurring in high-dimensional multi-agent control problems.

Our main contributions in this paper are as follows.
\begin{itemize}
    \item Formulation of the CBF framework for mean-field control problems in the space of probability measures.
    \item Derivation of MF-CBFs suitable for swarm avoidance and tracking.
    \item Numerical experiments of swarm avoidance and tracking with up to 200 agents.
\end{itemize}

This paper is organized as follows. In Section~\ref{sec: background}, we review preliminary concepts on control barrier functions and their challenges in the mult-agent setting. In Section~\ref{sec: MF-CBF}, we present the mean-field control barrier function framework. In Section~\ref{sec: experiments}, we walk through some illustrative examples of swarm tracking and avoidance with up 200 agents.  

\section{Background: Control Barrier Functions}
\label{sec: background}

In this section, we provide a brief introduction to CBFs and refer to~\cite{ames2019control,ames2016control} for a more in-depth discussion of CBFs.

\subsection{Control Problems}

We consider deterministic finite time-horizon control problems, where a system obeys the dynamics
\begin{equation}
    \partial_s z(s) = f(s,z(s),u(s)), \quad z(t)= x, \quad t \leq  s \leq  T.
    \label{eq: dynamics_single_agent}
\end{equation}
Above, $x \in \mathbb{R}^d$ is the initial state, $T$ is the time-horizon, $t<T$ is an initial time, and $z:[t,T] \to \mathbb{R}^d$ and $u:[t,T] \to U \subset \mathbb{R}^m$ are, correspondingly, the state and the control of the system as functions of time. Furthermore, the function $f \colon [t,T]\times \mathbb{R}^d\times U \to \mathbb{R}^d$ models the evolution of the state $z$ in response to the control $u \colon [t,T]\to U$. We say that the system is \textit{control-affine} if the dynamics are affine with respect to the control; that is,
\begin{equation}
    f(s,z,u) = A(s,z) + B(s,z)u
    \label{eq: affine_controller}
\end{equation}
where $A,B$ are possibly nonlinear maps of time and state. Control-affine systems are ubiquitous and cover a wide range of applications~\cite{kunisch2020semiglobal,flemingsoner06,carrillo2013modeling,ames2019control}. 

In control problems, one searches for controls for achieving a suitable goal. For instance, one might seek controls for reaching a final destination while avoiding dangerous zones; that is,
\begin{equation}\label{eq:reach}
    \text{find}~u(\cdot)~\text{s.t.}~z(T)\in R \quad \text{and} \quad z(s)\notin D,~\forall t\leq s \leq T,
\end{equation}
where $R$ is the destination set, and $D$ is the dangerous zone. See~\cite{bansal2017hamilton} for a detailed discussion on problems of type~\eqref{eq:reach}.

A large class of control problems seek to control a system in an optimal manner; that is, search for controls that minimize a \textit{cost functional}
\begin{equation}\label{eq:OC}
	\begin{split}
    &\inf_{u(s)\in U}  \int_{t}^T L\big(s,z(s), u(s)\big) \, d s +  G\big(z(T)\big) \\ \text{s.t. }~&\eqref{eq: dynamics_single_agent}~\text{holds},
    \end{split}
\end{equation}
where $L \colon [0,T]\times \mathbb{R}^d \times U \to \mathbb{R}$ is the \textit{running cost} (or the \textit{Lagrangian}), $G \colon \mathbb{R}^d \to \mathbb{R}$ is the \textit{terminal cost}, and $\Phi$ is the so-called \textit{value function} or \textit{optimal cost-to-go}.

Problems such as~\eqref{eq:OC} are called \textit{optimal control} problems. A solution $u^*_{t,x}$ of \eqref{eq:OC} is called an \textit{optimal control}. Accordingly, the $z^*_{t,x}$ which corresponds to $u^*_{t,x}$ is called an \textit{optimal trajectory}. See, for instance,~\cite{flemingsoner06} for a detailed account on optimal control problems.

Whether it is the reachability problem~\eqref{eq:reach} or the optimal control problem~\eqref{eq:OC}, it is advantageous to find controls in a \textit{feedback form}; that is,
\begin{equation}\label{eq:oc_feedbackform}
    u^*_{t,x}(s)=q(s,z^*_{t,x}(s)),\quad t\leq s \leq T.
\end{equation}
Here, the function $q$ is called a \textit{policy function}. Hence, instead of searching for controls separately at each initial point $(t,x)$ one can search for a suitable policy function that yields the desired controls for all initial points at once.

\begin{comment}
\begin{equation}\label{eq:Joc}
     \int_{t}^T L\big(s,z(s), u(s)\big) d s  \,\, +  \,\, G\big(z(T)\big),
\end{equation}

Suitable regularity requirements on $f,L,G,U$ are necessary (see \cite[Sec. I.3, I.8-9]{flemingsoner06} for more details) for the . The goal of the optimal control problem is to find a control $u^*_{t,x}$ that incurs the minimal cost, i.e,

Another central object in the optimal control theory is the \textit{Hamilton-Jacobi-Bellman} partial differential equation (PDE). One first defines the Hamiltonian of the system by 
\begin{equation}
    \begin{split}
        H(s,z,p) = \sup_{u \in U} \left\{ -p \cdot f(s,z,u)  - L(s, z, u) \right\},
    \end{split}
\end{equation}
where $p \in \mathbb{R}^d$ is the \emph{adjoint state}. Again, under suitable regularity assumptions, one can prove that the value function is the \textit{viscosity solution} of the HJB PDE
\begin{equation}\label{eq:HJB}
    \begin{cases}
    -\partial_t \Phi(s,z)+H(s,z,\nabla \Phi(s,z))=0,\\
    \Phi(T,z)=G(z).
    \end{cases}
\end{equation}    
\end{comment}

\subsection{Control Barrier Functions}

Controls in feedback-form~\eqref{eq:oc_feedbackform} are satisfactory when we have access to problem data, such as $f,L,G$ in~\eqref{eq:OC} or $R,D$ in~\eqref{eq:reach}, that encode the essential features of the problem. However, $q$ is not designed to handle \emph{unforeseen} circumstances such as \textit{real-time} collision and danger zone avoidance, or tracking.

To this end, one can enhance the nominal (pre-computed) controller $u^*$ with mission-oriented filters that use sensor data to adjust $u^*$ in real-time when, e.g., an unforeseen obstacle appears.

A successful approach to filter $u^*$ are CBFs. The basic idea underlying CBFs is as follows. Consider the dynamics of agents in~\eqref{eq: dynamics_single_agent}, where $u$ is some control. Furthermore, assume that $h$ encodes safety constraints or other goals so that it is desireable to have
\begin{equation}\label{eq:h>=0}
h(z(s)) \geq 0, \quad \forall s\geq t.     
\end{equation}
One way to ensure this is to impose
\begin{equation}\label{eq:cbf_condition}
\frac{d}{ds}h(z(s))\geq -\alpha(h(z(s))),\quad \forall s\geq t,
\end{equation}
where $\alpha:\mathbb{R} \to \mathbb{R}$ is a strictly increasing smooth function such that $\alpha(0)=0$~\cite{ames2019control,ames2016control}. Feeding~\eqref{eq:cbf_condition} in the dynamics of $z$, we obtain
\begin{equation}
\nabla h(z(s))\cdot f(s,z(s),u(s)) \geq -\alpha(h(z(s))),\quad \forall s\geq t.
\end{equation}

Thus, one can adjust $u^*$ in real time by solving the following quadratic program
\begin{equation}
    \begin{split}
    &u_{\text{CBF}}(s) \in \underset{{u \in U}}{\operatorname{argmin}} \;\| u - u^*(s) \|^2
    \\
    &\mbox{s.t. } \nabla h(z(s))\cdot f(s,z(s),u) \geq -\alpha(h(z(s))).
    \end{split}
    \label{eq: general_CBF}
\end{equation}

The construction of $h$ depends on the application and on the type of live sensors. 

\subsubsection{Existing Challenges}
One of the main drawbacks of CBFs is that it is prone to the \textit{curse of dimensionality}~\cite{bellman57} for multi-agent systems. The latter appear, for instance, in applications such as the Glider Coordinated Control System for ocean monitoring~\cite{paley2008cooperative} and informative Unmanned Aerial Vehicle (UAV) mission planning~\cite{glock2020mission}.
Indeed, assume that $z_1,z_2,\cdots,z_n$ represent the states of $n$ control systems (agents) obeying the dynamics
\begin{equation}
    \begin{split}
    &\partial_s z_i(s) = f_i(z_i(s),u_i(s)), \quad 1\leq i \leq n,
    \\
    &z(t)= x, \quad t \leq  s \leq  T.
    \end{split}
    \label{eq: dynamics_multiple_agent}
\end{equation}
Furthermore, let $h_1,h_2,\cdots,h_n$ encode the safety requirements for individual agents $1,2,\cdots,n$, respectively. Finally, let $h_{ij}$ encode a mutual safety requirement for a pair of agents $1\leq i \neq j \leq n$. For example, the functions
\begin{equation*}
h_{ij}(z_i,z_j)=\|z_i-z_j\|^2-\epsilon_{\text{safe}}^2    
\end{equation*}
reflect the requirement that the distance between two agents should be at least $\epsilon_{\text{safe}} > 0$.

A common approach to study such multi-agent systems is to concatenate all states and controls into one ``large'' agent. Specifically, let 
\begin{equation*}
\begin{split}
    \mathbf{z}=&(z_1,z_2,\ldots,z_n)\in \mathbb{R}^{d\times n},
    \\
    \mathbf{u}=&(u_1,u_2,\ldots,u_n)\in U^n \subset \mathbb{R}^{m \times n},\\
    ~\mathbf{f}(\mathbf{z},\mathbf{u})=&(f_1(z_1,u_1),f_2(z_2,u_2),\ldots,f_n(z_n,u_n))\in \mathbb{R}^{d\times n},
\end{split}
\end{equation*}
and
\begin{equation*}
    \begin{split}
        &\mathbf{h}_i(\mathbf{z})=h_i(z_i),~\forall 1\leq i \leq n, 
        \\
        &\mathbf{h}_{ij}(\mathbf{z})=h_{ij}(z_i,z_j),~\forall 1\leq i \neq j \leq n.
    \end{split}
\end{equation*}
Then the quadratic program for computing safe controls is
\begin{equation}
    \begin{split}
    &\mathbf{u}_{\text{CBF}}(s) \in \underset{\mathbf{u} \in U^n}{\operatorname{argmin}} \;\| \mathbf{u} - \mathbf{u}^*(s) \|^2
    \\
    \mbox{s.t.}~& \nabla \mathbf{h}_i(\mathbf{z}(s))\cdot \mathbf{f}(\mathbf{z}(s),\mathbf{u}) \geq -\alpha(\mathbf{h}_i(\mathbf{z}(s))),~\forall i,\\
    & \nabla \mathbf{h}_{ij}(\mathbf{z}(s))\cdot \mathbf{f}(\mathbf{z}(s),\mathbf{u}) \geq -\alpha(\mathbf{h}_{ij}(\mathbf{z}(s))),~\forall i\neq j.
    \end{split}
    \label{eq: general_CBF_swarm}
\end{equation}
where $\mathbf{u}^* = (u_1^*, u_2^*, \ldots, u_n^*) \in U^n$ is the concatenated nominal controllers. 

A few important remarks are in order. 
\begin{enumerate}
    \item The dimension of the quadratic problem~\eqref{eq: general_CBF_swarm} is $n \times m$ as opposed to $m$ in~\eqref{eq: general_CBF}.
    \item The number of inequality constraints (excluding the a priori constraints, $\mathbf{u} \in U^n$, on the controls) in~\eqref{eq: general_CBF_swarm} is $n^2$ as opposed to $1$ in~\eqref{eq: general_CBF}. Hence, for control-affine systems, going from a single agent to $n$ agents yields going from one additional half-space constraint to $n^2$ additional half-space constraints.
    \item If individual agents need more than one CBF for their safety requirements, and there are additional collective safety or goal requirements beyond pair-to-pair interactions, the total number of CBF or inequality constraints in the projection problem will be even larger. 
    \item The inequality constraints in~\eqref{eq: general_CBF_swarm} are \textit{coupled} and in general cannot be solved on the level of individual agents. Smart decoupling techniques mitigate the coupling issue, but still lead to $n$ quadratic programs with $O(n)$ inequality constraints, which are challenging to solve online for $n\gg 10$~\cite{chen2020guaranteed}.
\end{enumerate}

\section{Mean-field Control-barrier Functions}
\label{sec: MF-CBF}

To mitigate the challenges of computing CBFs for large multi-agent swarms, we introduce \emph{Mean-Field Control Barrier Functions} (MF-CBFs). The core idea is to formulate the swarm CBF problem in~\eqref{eq: general_CBF_swarm} in the space of \emph{distributions}. This allows us to, e.g., represent the inter-agent distance requirements in~\eqref{eq: general_CBF_swarm}
by a lower bound on a \emph{single} mean-field function.

\subsection{Mean-field Control}

Assume that we have a population (swarm) of agents in the state space, where an individual agent follows the dynamics~\eqref{eq: dynamics_single_agent}. Furthermore, assume that the distribution of the population in the state space at time $s$ is described by the probability measure $\rho(s,\cdot)$, where we often use the same notation for a measure and its density function.

In the mean-field control setting, we consider only feedback-form (distributed) controls and assume that all agents adopt the same policy function. Hence, given a policy function $q$ adopted by the population, the density evolves according to the continuity equation
\begin{equation}\label{eq:cont_eq}
\begin{cases}
    \partial_s \rho(s,x) + \nabla \cdot (\rho(s,x)f(s,x,q(s,x)) = 0,~s\in (0,T),\\
    \rho(0,x)=\rho_0(x),
\end{cases}
\end{equation}
where $\rho_0$ is the initial distribution of the population, and $\nabla \cdot$ is the divergence operator with respect to the state variable $x$.

The mean-field control or swarm-control problem is then formulated as
\begin{equation}
\begin{split}
    &\inf_{q(s,x) \in U} \int_0^T \mathcal{L}(s,\rho(s,\cdot),q(s,\cdot)) ds+\mathcal{G}(\rho(T,\cdot))\\
    \text{s.t. }~&\eqref{eq:cont_eq}~\text{holds},
\end{split}
\end{equation}
where $\mathcal{L}$ and $\mathcal{G}$ are mean-field running and terminal costs, respectively. The dependencies of $\mathcal{L},\mathcal{G}$ on $\rho$ encode the swarm behavior that one attempts to model.

\subsection{Mean-field control-barrier functions}

Analogous to single-agent control problems one may have safety constraints or goals for swarm control problems. Building on the mean-field control framework, we propose mean-field control-barrier functions (MF-CBFs) for efficiently handling safety constraints and other goals. 
% \begin{align}
%     C &= \{\rho \in \mathcal{P}(\mathbb{R}^d) \colon \mathcal{H}(\rho) \geq 0\}
%     \\
%     \partial C &= \{\rho \in \mathcal{P}(\mathbb{R}^d) \colon \mathcal{H}(\rho) = 0\}
%     \\
%     \text{int}(C) &=\{\rho \in \mathcal{P}(\mathbb{R}^d) \colon \mathcal{H}(\rho) > 0\}
% \end{align}

% \textcolor{red}{include time component?}

% \begin{definition}
%     Let $C$ be the superlevel set of a continuously differentiable function $\mathcal{H} \colon \mathcal{P}(\mathbb{R}^d) \to \mathbb{R}$, then $\mathcal{H}$ is a mean-field control barrier function (MF-CBF) if there exists a strictly increasing function $\alpha \colon \mathbb{R} \to \mathbb{R}$ with $\alpha(0) = 0$ such that 
%     \begin{equation}\label{eq: mf_cbf}
%         \sup_{u(x)\in U} \int_{\mathbb{R}^d} \nabla^W_{\rho} \mathcal{H} (\rho) \cdot f(x,u(x)) \rho(x) dx \geq -\alpha\left(\mathcal{H}(\rho)\right).
%     \end{equation}
%     for all $\rho \in \mathcal{P}(\mathbb{R}^d)$, where $\nabla^W_{\rho}$ is the Wasserstein derivative of the functional $\mathcal{H}$.
% \end{definition}

% In this case, the set of CBF or safe controls is denoted by
% \begin{equation}\label{eq:mf_K_cbf}
%     \begin{split}
%     &\mathcal{K}_{\text{CBF}}(\rho)
%     \\
%     &=\left\{ q~:~\int_{\mathbb{R}^d} \nabla^W_\rho \mathcal{H} (\rho) \cdot f(x,q(x)) \rho(x) dx \geq -\alpha\left(\mathcal{H}(\rho)\right) \right\}.
%     \end{split}
% \end{equation}
% with $q(x)\in U$.

Assume that $\mathcal{H}$ encodes, possibly time-dependent, safety constraints or other goals of the swarm; that is, one must have
\begin{equation}\label{eq:calH>=0}
\mathcal{H}(s,\rho(s,\cdot))\geq 0, \quad \forall s\geq 0.
\end{equation}
As in~\eqref{eq:h>=0}, one can ensure this previous inequality by imposing
\begin{equation}\label{eq:MFCBF_condition}
\frac{d}{ds}\mathcal{H}(s,\rho(s,\cdot))\geq -\alpha(\mathcal{H}(s,\rho(s,\cdot))),\quad \forall s\geq 0,
\end{equation}
where $\alpha:\mathbb{R}\to \mathbb{R}$ is again a strictly increasing smooth function such that $\alpha(0)=0$. We provide a simple proof of this statement for completeness.
\begin{theorem}
    Let $\alpha \in C(\mathbb{R})$ be a strictly increasing function such that $\alpha(0)=0$, and $\rho(s,\cdot)$ and $\mathcal{H}(s,\cdot)$ be such that $\omega(s)=\mathcal{H}(s,\rho(s,\cdot)),~s\geq 0$, is a continuously differentiable function. Then $\omega(0)\geq 0$ and~\eqref{eq:MFCBF_condition} guarantee that $\omega(s)\geq 0$ for all $s\geq 0$. 
\end{theorem}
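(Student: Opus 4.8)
The plan is to argue by contradiction, reducing the statement to an elementary ``first crossing'' analysis of the scalar function $\omega$. First I would rewrite the hypothesis~\eqref{eq:MFCBF_condition} in terms of $\omega$ alone: since $\omega(s)=\mathcal{H}(s,\rho(s,\cdot))$, the condition reads $\omega'(s)\geq -\alpha(\omega(s))$ for all $s\geq 0$, and the initial-time hypothesis is $\omega(0)\geq 0$. The goal is then purely one-dimensional: show that a continuously differentiable $\omega$ satisfying this differential inequality with $\omega(0)\geq 0$ stays nonnegative.

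Suppose, toward a contradiction, that $\omega(s_1)<0$ for some $s_1>0$. I would then isolate the last instant before $s_1$ at which $\omega$ is still nonnegative by setting $s_0=\sup\{s\in[0,s_1]:\omega(s)\geq 0\}$. This set is nonempty (it contains $0$) and bounded above by $s_1$, so $s_0$ is well defined. The key topological step is to pin down the behavior of $\omega$ at and beyond $s_0$: by continuity of $\omega$ one gets $\omega(s_0)\geq 0$ as a limit of values that are $\geq 0$, while $\omega(s)<0$ for every $s\in(s_0,s_1]$ by definition of the supremum, and letting $s\downarrow s_0$ forces $\omega(s_0)\leq 0$. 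Hence $\omega(s_0)=0$, and in particular $s_0<s_1$.

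The decisive point is where the assumptions on $\alpha$ enter. On the open interval $(s_0,s_1)$ we have $\omega(s)<0$, so strict monotonicity of $\alpha$ together with $\alpha(0)=0$ gives $\alpha(\omega(s))<\alpha(0)=0$, i.e. $-\alpha(\omega(s))>0$. Combined with the differential inequality this yields $\omega'(s)\geq -\alpha(\omega(s))>0$ throughout $(s_0,s_1)$. Therefore $\omega$ is strictly increasing on $[s_0,s_1]$, and $\omega(s_1)>\omega(s_0)=0$, contradicting $\omega(s_1)<0$. This contradiction establishes $\omega(s)\geq 0$ for all $s\geq 0$.

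I expect the main obstacle to be not any hard estimate but the careful bookkeeping at $s_0$: one must verify continuity from both sides to conclude $\omega(s_0)=0$, and that $s_0<s_1$, so that the interval $(s_0,s_1)$ on which $\omega<0$ is genuinely nonempty. It is worth noting that I would deliberately avoid a comparison-theorem route against the autonomous ODE $\dot y=-\alpha(y)$, $y(0)=0$, whose trivial solution is $y\equiv 0$: since $\alpha$ is only assumed continuous and strictly increasing, solutions of that ODE need not be unique, so a comparison argument would require extra hypotheses. The contradiction argument above instead uses only the sign information $-\alpha(\omega)>0$ when $\omega<0$, which is exactly what strict monotonicity and $\alpha(0)=0$ provide.
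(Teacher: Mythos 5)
Your proof is correct and takes essentially the same route as the paper's: both argue by contradiction, pin down a crossing point where $\omega$ vanishes (you via the supremum $s_0$, the paper via the left endpoint $a$ of a connected component of the open set $\{s:\omega(s)<0\}$), and then use strict monotonicity of $\alpha$ together with $\alpha(0)=0$ to conclude $\omega'\geq-\alpha(\omega)>0$ on the region where $\omega<0$, so that $\omega$ is strictly increasing there, contradicting that it stays negative. The only difference is the bookkeeping device used to locate the crossing point, which is immaterial.
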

\begin{proof}
    Assume by contradiction that
    \begin{equation*}
        N=\left\{s\geq 0~:~\omega(s)<0\right\}\neq \emptyset.
    \end{equation*}
    Since $\omega$ is continuous, we have that $N\subset (0,\infty)$ is an open set; hence, $N$ is a union of disjoint open intervals. Let $(a,b)\subset N$ be one such interval. Then we have that $0<a<\infty$, and
    \begin{equation}\label{eq:omega_sign}
        \omega(a)=0,\quad \omega(s)<0,\quad s\in (a,b).
    \end{equation}
    Hence, we have that
    \begin{equation*}
        \omega'(s) \geq -\alpha(\omega(s))>-\alpha(0)=0,\quad s\in (a,b).
    \end{equation*}
    Thus, $\omega$ is strictly increasing in $[a,b)$, which contradicts to~\eqref{eq:omega_sign}.
\end{proof}

Next, we find the constraints that~\eqref{eq:MFCBF_condition} imposes on the feedback (distributed) controls that the swarm should adopt. 
\begin{theorem}\label{thm:main_inclusion}
    Let
    \begin{equation}\label{eq:K_CBF_gen}
\begin{split}
    &\mathcal{K}_{\text{CBF}}(s,\rho)\\
    =&\bigg\{ q~:~\int_{\mathbb{R}^d} \nabla  \delta_\rho \mathcal{H}(s,\rho) \cdot f(s,x,q(x)) \rho(x)dx \\
    &\geq -\partial_s \mathcal{H}(s,\rho) -\alpha\left(\mathcal{H}(s,\rho)\right) \bigg\},\quad s\geq 0,
\end{split}
\end{equation}
and assume that the swarm adopts a policy function $q(s,\cdot)$ so that $\rho(\cdot,s)$ evolves according to~\eqref{eq:cont_eq}. Then~\eqref{eq:MFCBF_condition} is equivalent to
\begin{equation}\label{eq:MFCBF_inclusion}
    q(s,\cdot) \in \mathcal{K}_{\text{CBF}}(s,\rho(s,\cdot)),\quad \forall s\geq 0.
\end{equation}
\end{theorem}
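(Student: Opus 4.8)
The plan is to show the claimed equivalence by computing the total time derivative $\frac{d}{ds}\mathcal{H}(s,\rho(s,\cdot))$ explicitly and observing that the defining inequality of $\mathcal{K}_{\text{CBF}}$ in~\eqref{eq:K_CBF_gen} is nothing but~\eqref{eq:MFCBF_condition} rewritten after this derivative is expanded. Since both~\eqref{eq:MFCBF_condition} and the membership condition are inequalities with the same right-hand side structure, it suffices to establish the single identity
\[
\frac{d}{ds}\mathcal{H}(s,\rho(s,\cdot)) = \partial_s \mathcal{H}(s,\rho) + \int_{\mathbb{R}^d} \nabla \delta_\rho \mathcal{H}(s,\rho)(x) \cdot f(s,x,q(s,x))\, \rho(s,x)\,dx.
\]
Once this is in hand, substituting it into~\eqref{eq:MFCBF_condition} and moving $\partial_s \mathcal{H}(s,\rho)$ to the right-hand side produces exactly the inequality defining $\mathcal{K}_{\text{CBF}}(s,\rho(s,\cdot))$, so~\eqref{eq:MFCBF_condition} holds at a given $s$ if and only if $q(s,\cdot)$ lies in $\mathcal{K}_{\text{CBF}}(s,\rho(s,\cdot))$, which is precisely~\eqref{eq:MFCBF_inclusion}.

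First I would invoke the chain rule for functionals on the space of probability measures. Writing $\omega(s)=\mathcal{H}(s,\rho(s,\cdot))$, the dependence on $s$ is twofold: the explicit time slot, contributing $\partial_s \mathcal{H}$, and the implicit dependence through the curve $s\mapsto \rho(s,\cdot)$, which is linearized by the first variation $\delta_\rho \mathcal{H}$. This gives
\[
\frac{d}{ds}\mathcal{H}(s,\rho(s,\cdot)) = \partial_s \mathcal{H}(s,\rho) + \int_{\mathbb{R}^d} \delta_\rho \mathcal{H}(s,\rho)(x)\,\partial_s \rho(s,x)\,dx.
\]
Next I would substitute the continuity equation~\eqref{eq:cont_eq}, replacing $\partial_s \rho$ with $-\nabla\cdot(\rho\, f(s,\cdot,q(s,\cdot)))$, and then integrate by parts in $x$ to transfer the divergence onto $\delta_\rho \mathcal{H}$. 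Assuming sufficient spatial decay of $\rho\,f$ (or compact support of $\rho$) so that the boundary flux vanishes, the two sign changes combine to yield the term $+\int \nabla \delta_\rho \mathcal{H}\cdot f\,\rho\,dx$ appearing in~\eqref{eq:K_CBF_gen}, completing the identity above.

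The main obstacle is making the measure-space chain rule and the integration by parts rigorous rather than merely formal. This requires regularity that is implicitly built into the standing hypothesis that $\omega(s)$ is continuously differentiable: $\mathcal{H}$ should admit a first variation $\delta_\rho \mathcal{H}$ that is itself spatially differentiable, the integrand $\nabla \delta_\rho \mathcal{H}\cdot f\,\rho$ should be integrable, and $\rho\,f$ should decay fast enough at infinity to eliminate the boundary term. Under these assumptions every manipulation is justified and the remaining rearrangement is purely algebraic. I would close by emphasizing that each step is reversible, so the inclusion~\eqref{eq:MFCBF_inclusion} and the differential inequality~\eqref{eq:MFCBF_condition} are genuinely equivalent rather than one merely implying the other.
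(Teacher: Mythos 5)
Your proposal is correct and follows essentially the same route as the paper's own proof: the functional chain rule splitting $\frac{d}{ds}\mathcal{H}$ into an explicit $\partial_s\mathcal{H}$ term plus an integral against $\partial_s\rho$, substitution of the continuity equation~\eqref{eq:cont_eq}, and integration by parts under the decay assumptions (which the paper relegates to a remark) to obtain $\int \nabla\delta_\rho\mathcal{H}\cdot f\,\rho\,dx$, after which the equivalence is a purely algebraic rearrangement. No gaps; your explicit note that every step is reversible makes the equivalence claim cleaner than the paper's terse ``or, equivalently.''
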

\begin{remark}
    For simplicity, we assume necessary regularity for~\eqref{eq:cont_eq} to be well-posed and smooth enough for differential calculus. Additionally, we assume that $\rho(s,\cdot)$ decays fast enough at infinity, e.g., compactly supported, which can be ensured by fast decaying $\rho_0$ and smooth, e.g. Lipschitz, $q(s,\cdot)$ for $s\geq 0$~\cite{ambrosio08}.
\end{remark}
\begin{proof}
We have that
\begin{equation*}
    \begin{split}
        &\frac{d}{ds}\mathcal{H}(s,\rho(s,\cdot))\\
        =&\partial_s \mathcal{H}(s,\rho(s,\cdot))+\int_{\mathbb{R}^d} \delta_\rho \mathcal{H}(s,\rho(s,\cdot)) \partial_s \rho(s,x) dx,
    \end{split}
\end{equation*}
where $\delta_\rho \mathcal{H}$ is the Fr\'{e}chet derivative with respect to $\rho$ variable. Taking into account~\eqref{eq:cont_eq} and integrating by parts we find that
\begin{equation*}
    \begin{split}
        &\int_{\mathbb{R}^d} \delta_\rho \mathcal{H}(s,\rho(s,\cdot)) \partial_s \rho(s,x) dx\\
        =&-\int_{\mathbb{R}^d} \delta_\rho \mathcal{H}(s,\rho(s,\cdot)) \nabla \cdot (\rho(s,x)f(s,x,q(s,x))dx\\
        =&\int_{\mathbb{R}^d} \nabla  \delta_\rho \mathcal{H}(s,\rho(s,\cdot)) \cdot f(s,x,q(s,x)) \rho(s,x)dx.
    \end{split}
\end{equation*}
Hence,~\eqref{eq:MFCBF_condition} reduces to
\begin{equation}\label{eq:MFCBF_expanded}
    \begin{split}
        &\partial_s \mathcal{H}(s,\rho(s,\cdot))\\
        &+\int_{\mathbb{R}^d} \nabla  \delta_\rho \mathcal{H}(s,\rho(s,\cdot)) \cdot f(s,x,q(s,x)) \rho(s,x)dx\\
        \geq& -\alpha (\mathcal{H}(s,\rho(s,\cdot))),\quad \forall s\geq 0,
    \end{split}
\end{equation}    
or, equivalently,~\eqref{eq:MFCBF_inclusion}.
\end{proof}

Theorem~\ref{thm:main_inclusion} provides constraints on the policy function that ensure safe controls or mission accomplishing controls for the swarm. The mean-field analog of~\eqref{eq: general_CBF} is
\begin{equation}
    \begin{split}
    &q_{\text{CBF}}(s,\cdot) \in \underset{\substack{q \in \mathcal{K}_{\text{CBF}}(s,\rho(s,\cdot))\\q(x)\in U}}{\operatorname{argmin}} \; \| q - q^*(s,\cdot) \|^2_{L^2(\rho(s,\cdot))},
    \end{split}
    \label{eq: MF_CBF}
\end{equation}
where $q^*(s,\cdot),~s\geq 0$ is the nominal control of the swarm.

\begin{figure*}[tb]
    \centering
    \begin{tabular}{ccc}
    \multicolumn{3}{c}{Swarm Avoidance Example: 3D Double Integrator}
    \\
    \includegraphics[width=0.22\textwidth]{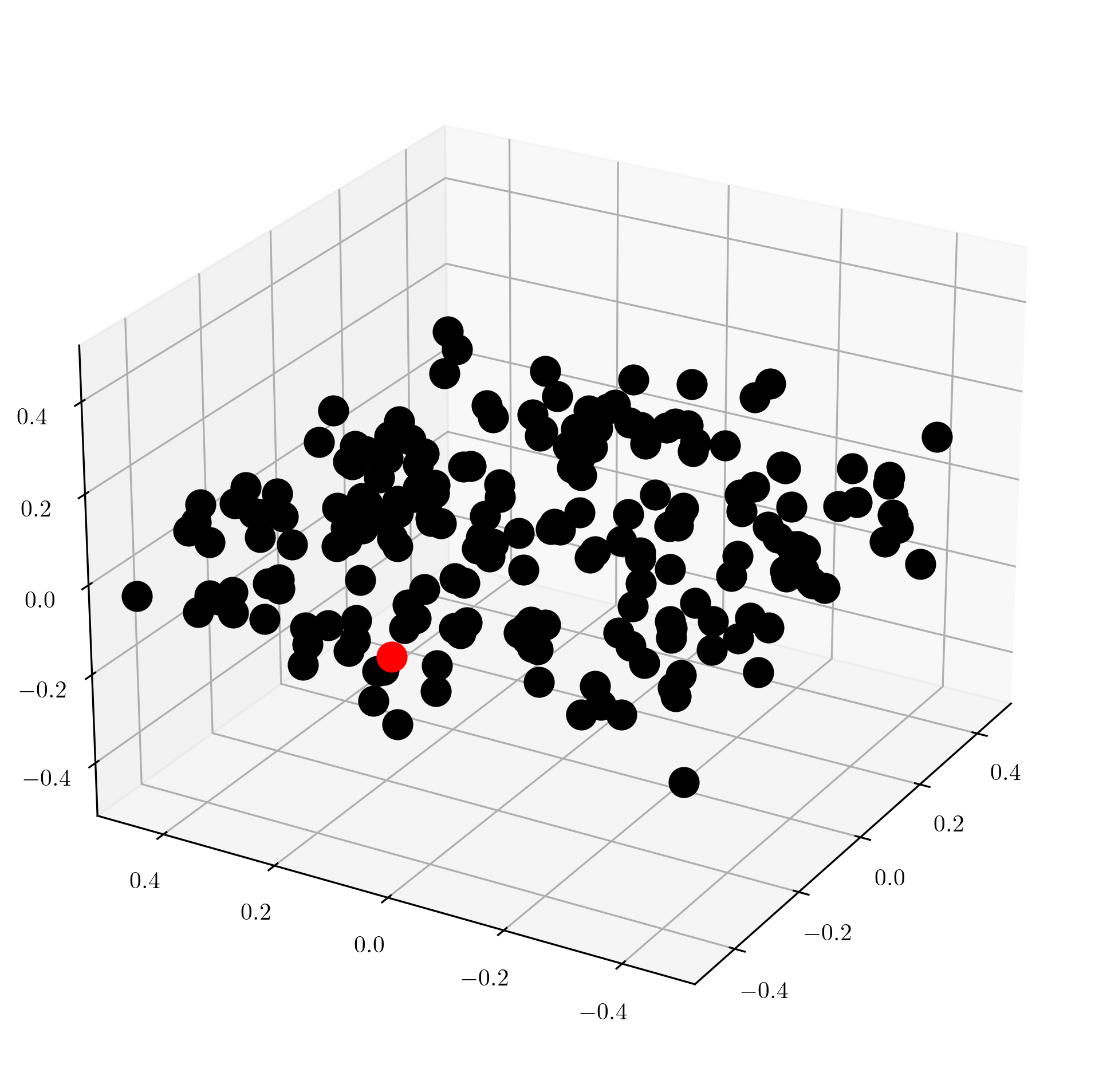}
    &
    \includegraphics[width=0.22\textwidth]{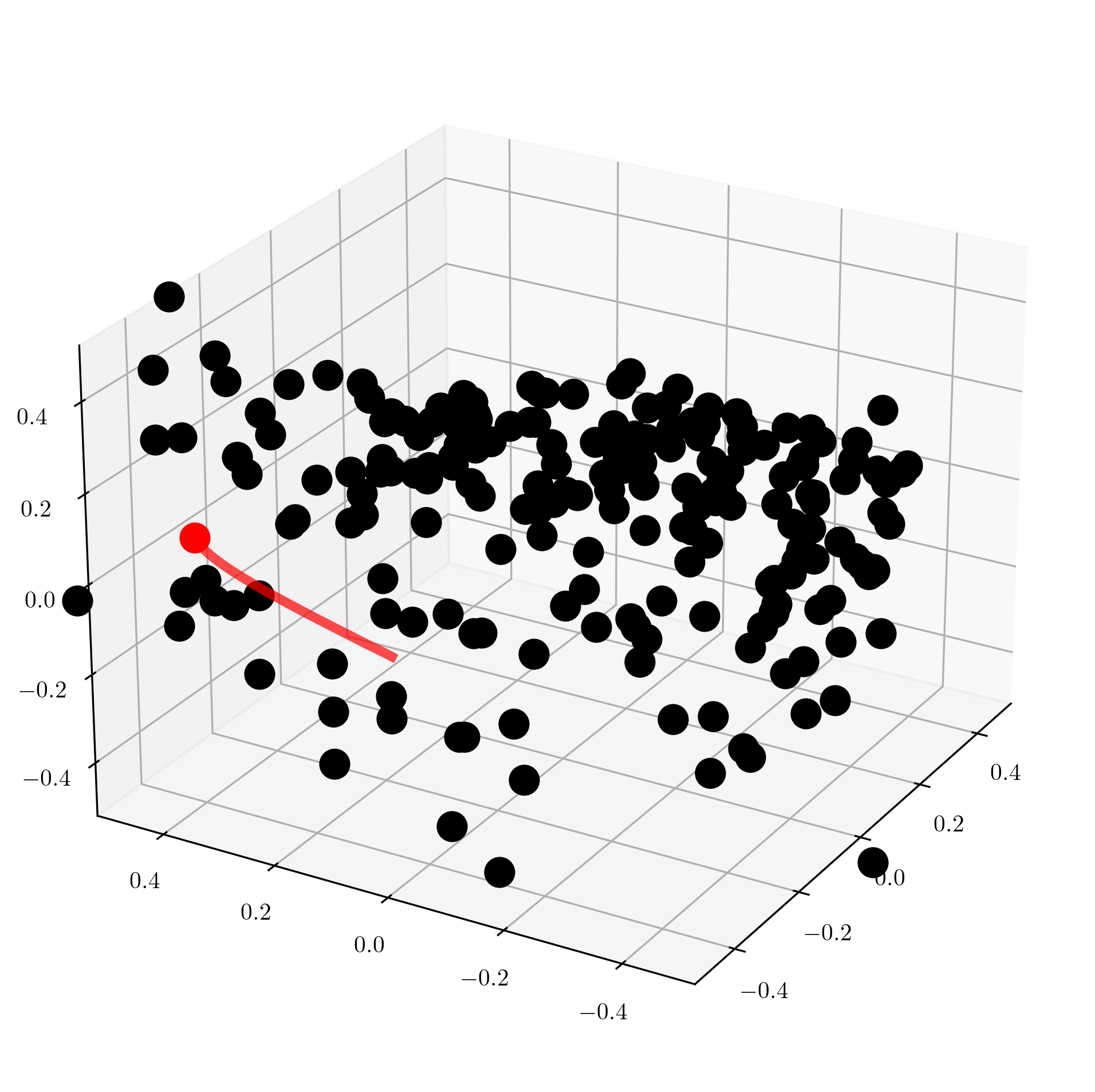}
    &
    \includegraphics[width=0.22\textwidth]{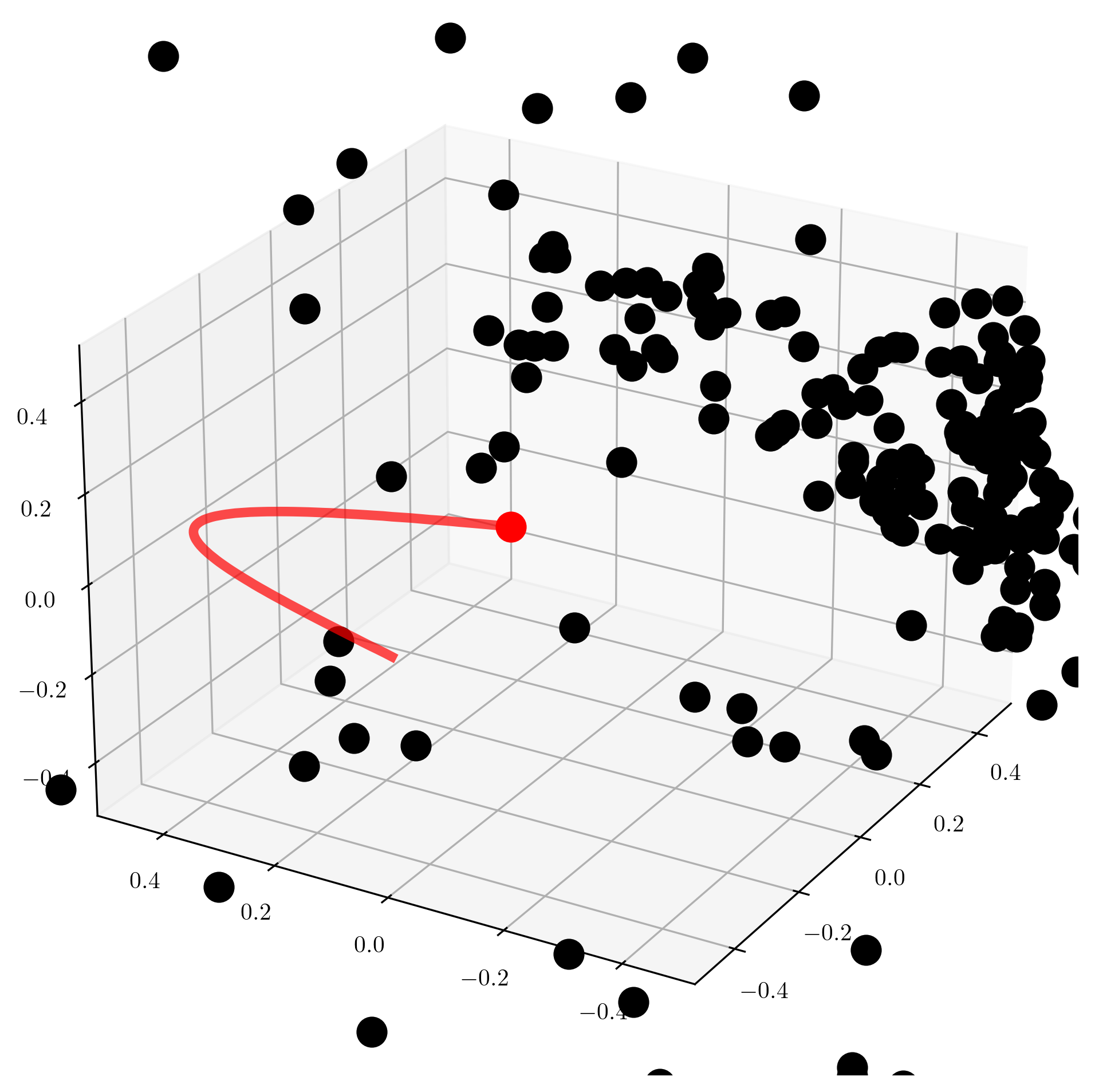}
    \\
    \includegraphics[width=0.22\textwidth]{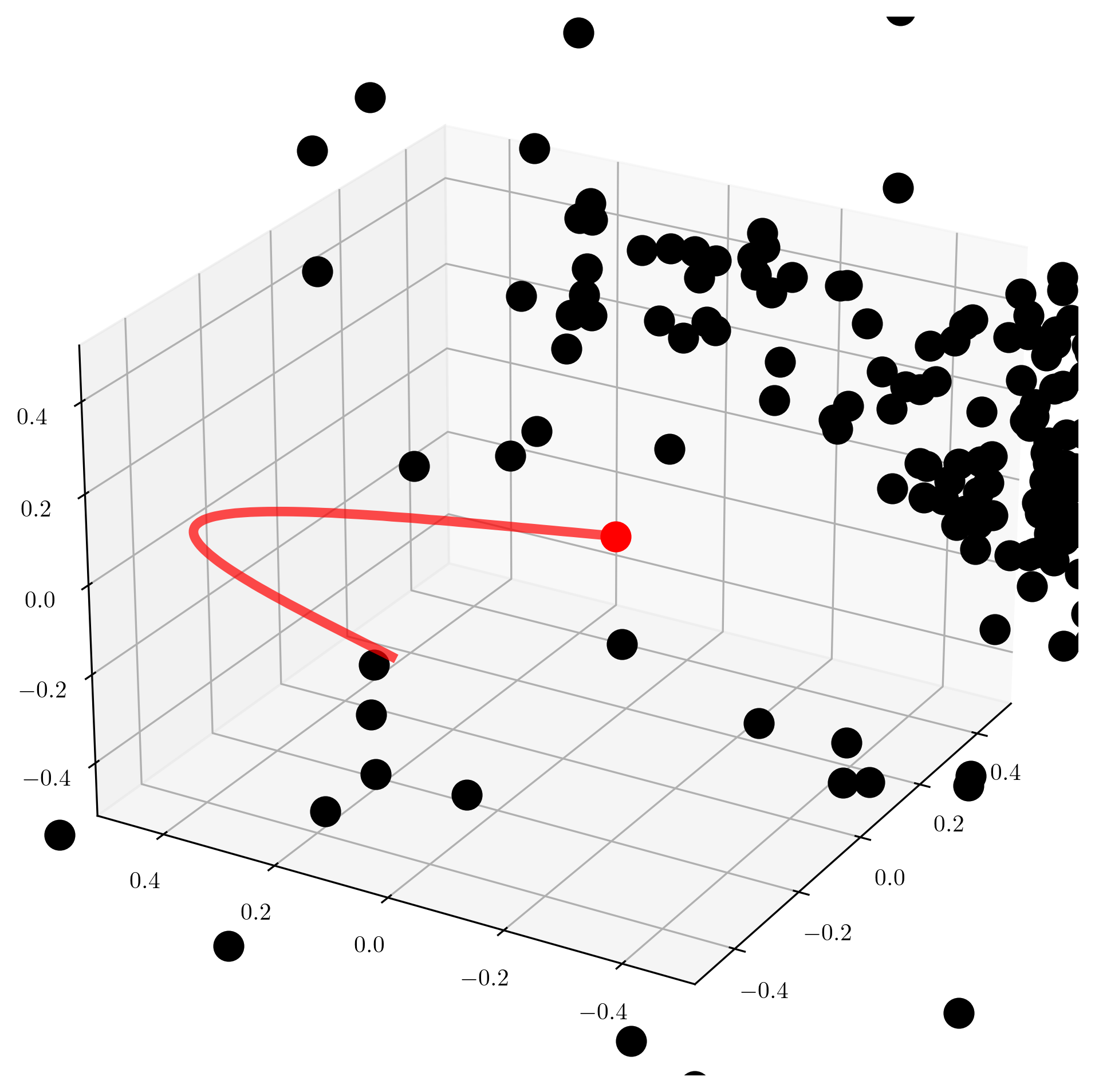}
    &
    \includegraphics[width=0.22\textwidth]{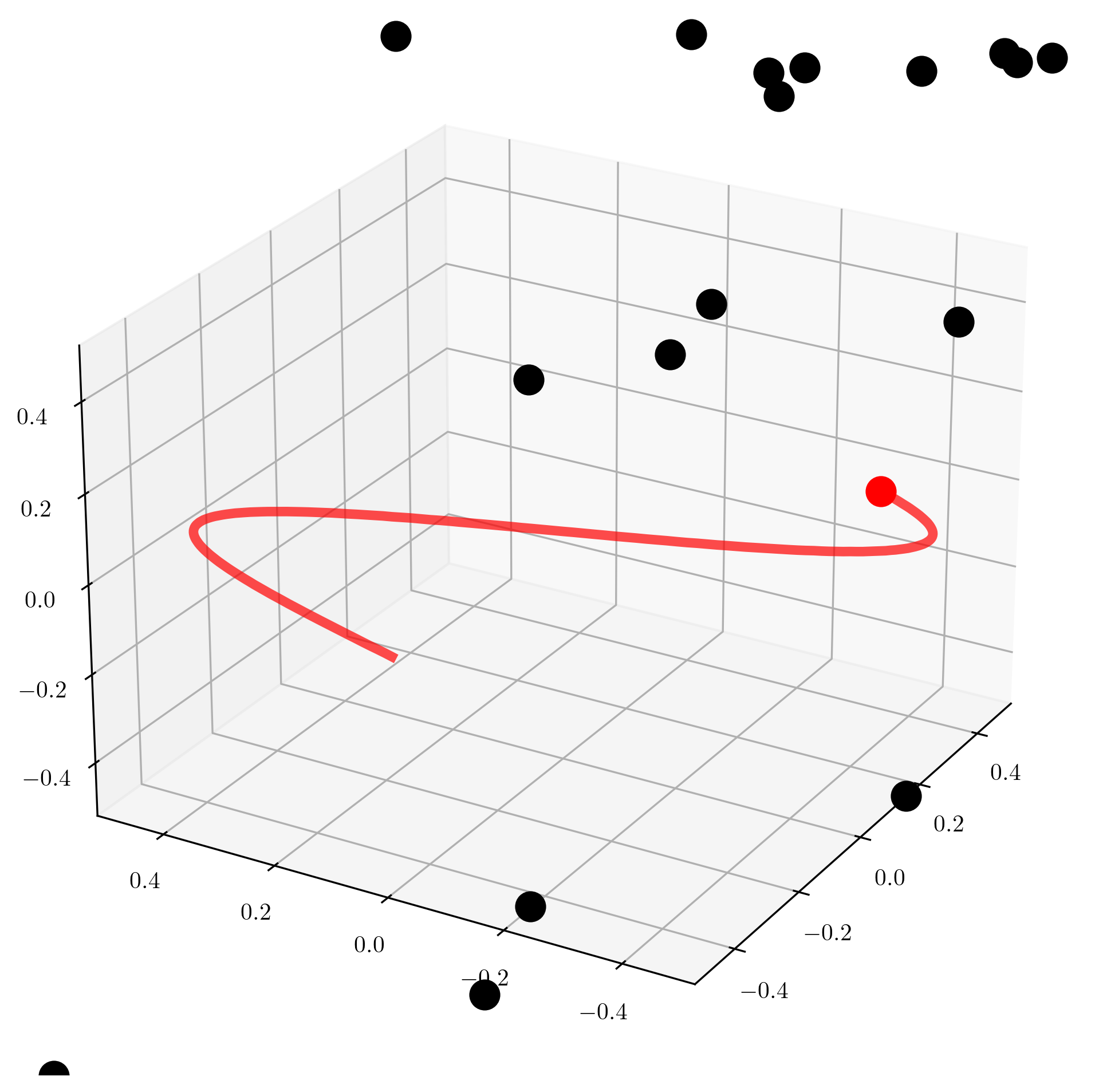}
    &
    \includegraphics[width=0.22\textwidth]{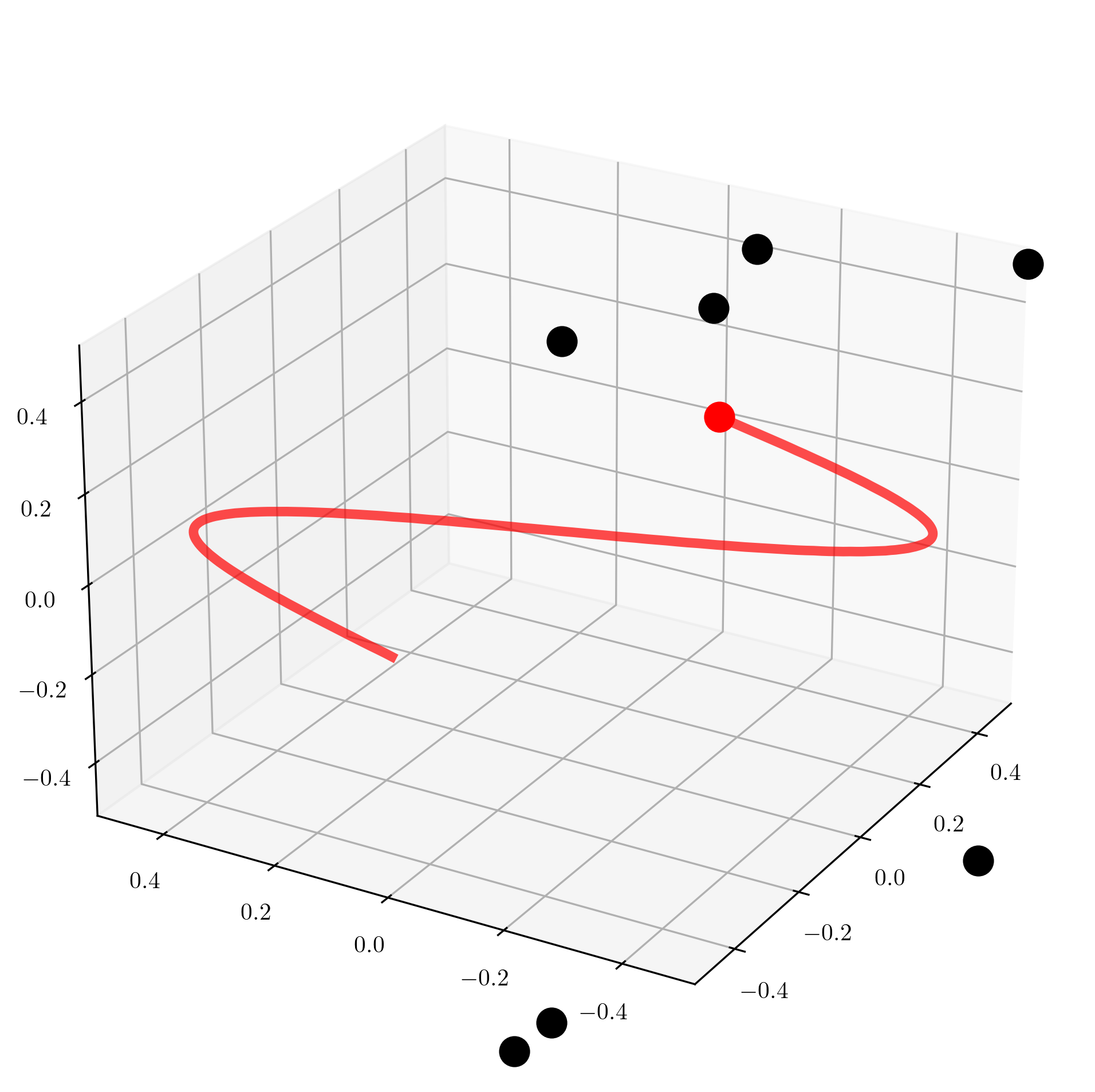}
    \end{tabular}
    \caption{Illustration of Swarm avoidance with the acceleration model. Swarm of agents (black dots) avoid  the red agent. The agent swarm must maintain an MMD value greater than $2\epsilon$ while moving the least amount possible as described in~\eqref{prob: CBF_avoidance}.}
    \label{fig: swarm_avoidance_3d}
\end{figure*}

For control-affine systems we have that
\begin{equation}\label{eq:K_CBF_lin}
\begin{split}
    &\mathcal{K}_{\text{CBF}}(s,\rho)\\
    =&\bigg\{ q~:~\int_{\mathbb{R}^d} \nabla  \delta_\rho \mathcal{H}(s,\rho) \cdot (A(s,x)+B(s,x)q(x)) \rho(x)dx \\
    &\geq -\partial_s \mathcal{H}(s,\rho) -\alpha\left(\mathcal{H}(s,\rho)\right) \bigg\}.
\end{split}
\end{equation}

Two critical remarks are in order.
\begin{enumerate}
    \item \eqref{eq: MF_CBF} is a quadratic program when $U$ is a convex polytope and the system is control-affine.
    \item In the mean-field setting, we only have \emph{one constraint} (excluding the a priori constraints $q(x)\in U$) as opposed to $n^2$ in the direct approach~\eqref{eq: general_CBF_swarm}. 
    
    % The number of inequality constraints (excluding the a priori constraints, $u(x)\in U$, on the controls) is 1 as opposed to $n^2$ in the direct approach~\eqref{eq: general_CBF_swarm}.
\end{enumerate}

These two points yield MF-CBF an \textit{efficient framework} for safe swarm-control.

\begin{figure*}[tb]
    \centering
    \begin{tabular}{ccc}
    \multicolumn{3}{c}{Swarm Tracking Example: 3D Double Integrator}
    \\
    \includegraphics[width=0.22\textwidth]{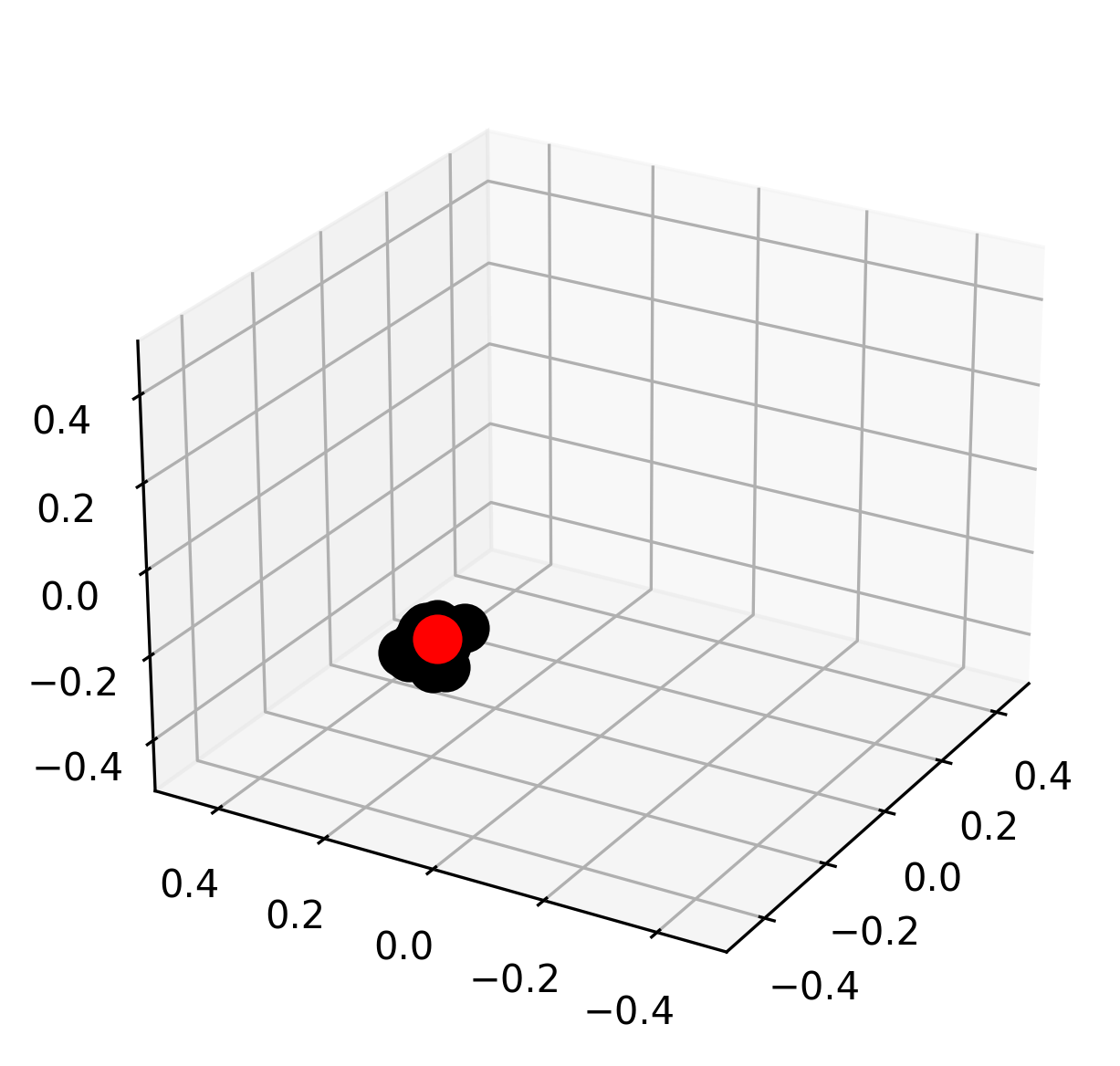}
    &
    \includegraphics[width=0.22\textwidth]{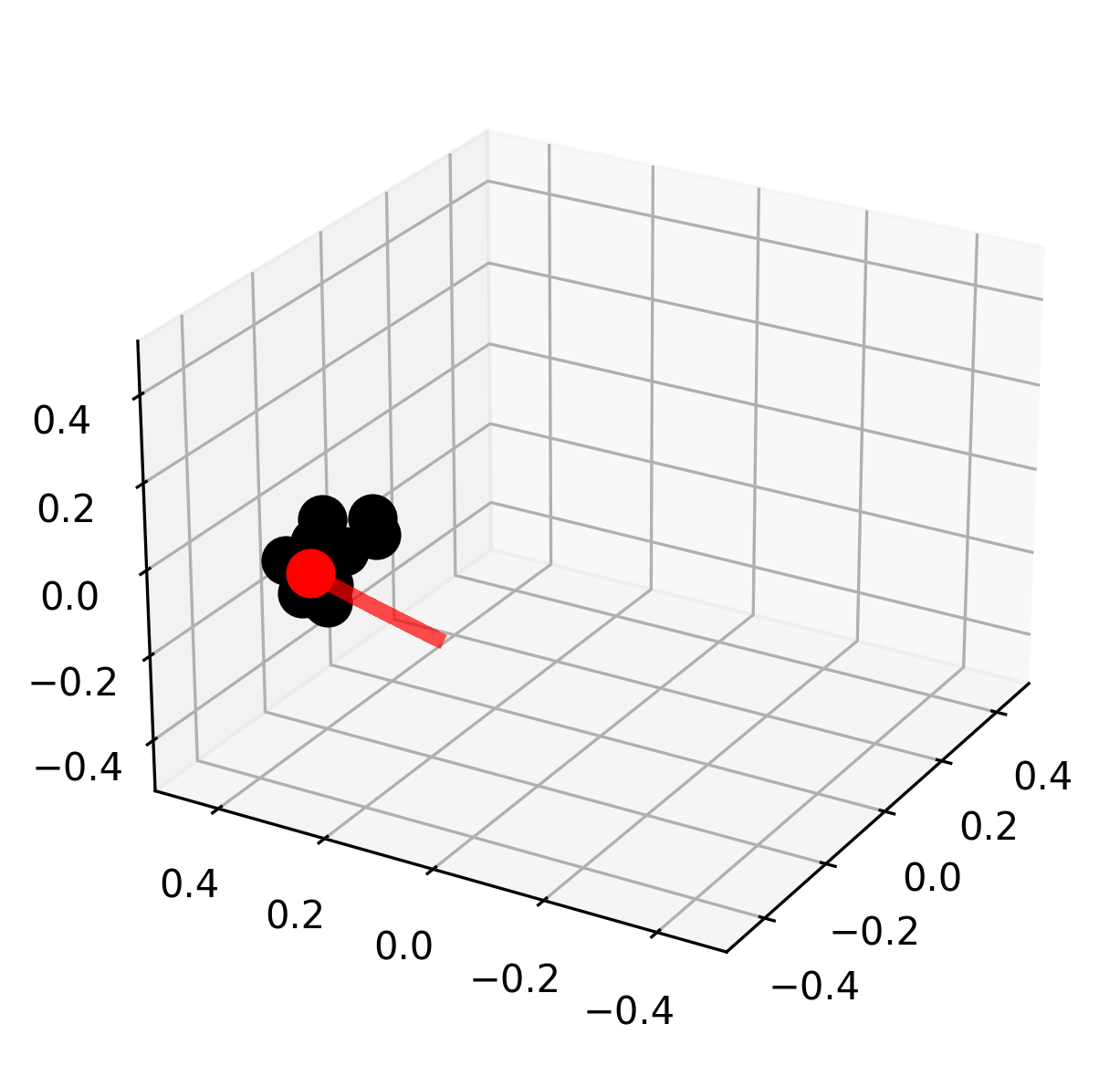}
    &
    \includegraphics[width=0.22\textwidth]{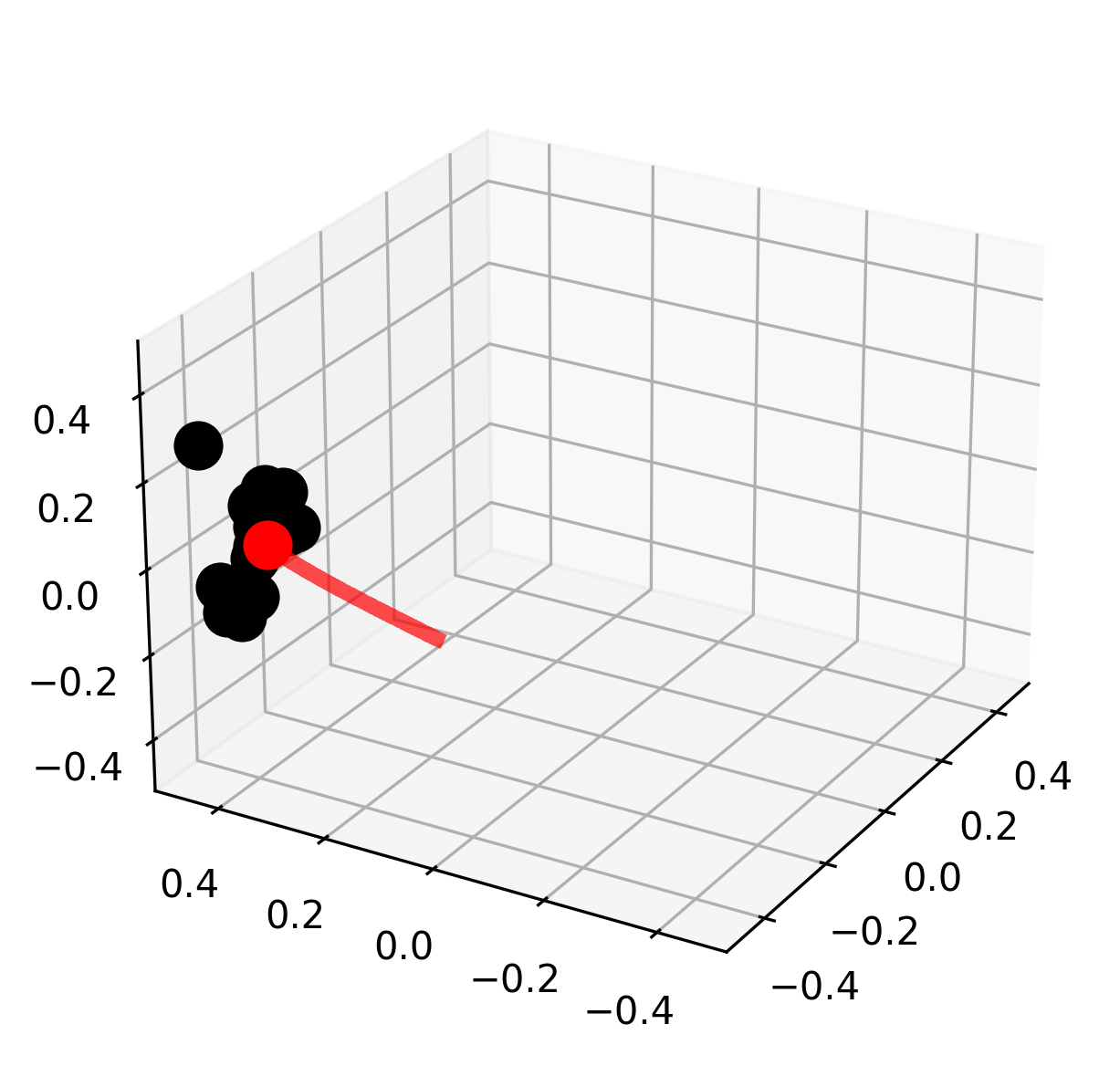}
    \\
    \includegraphics[width=0.22\textwidth]{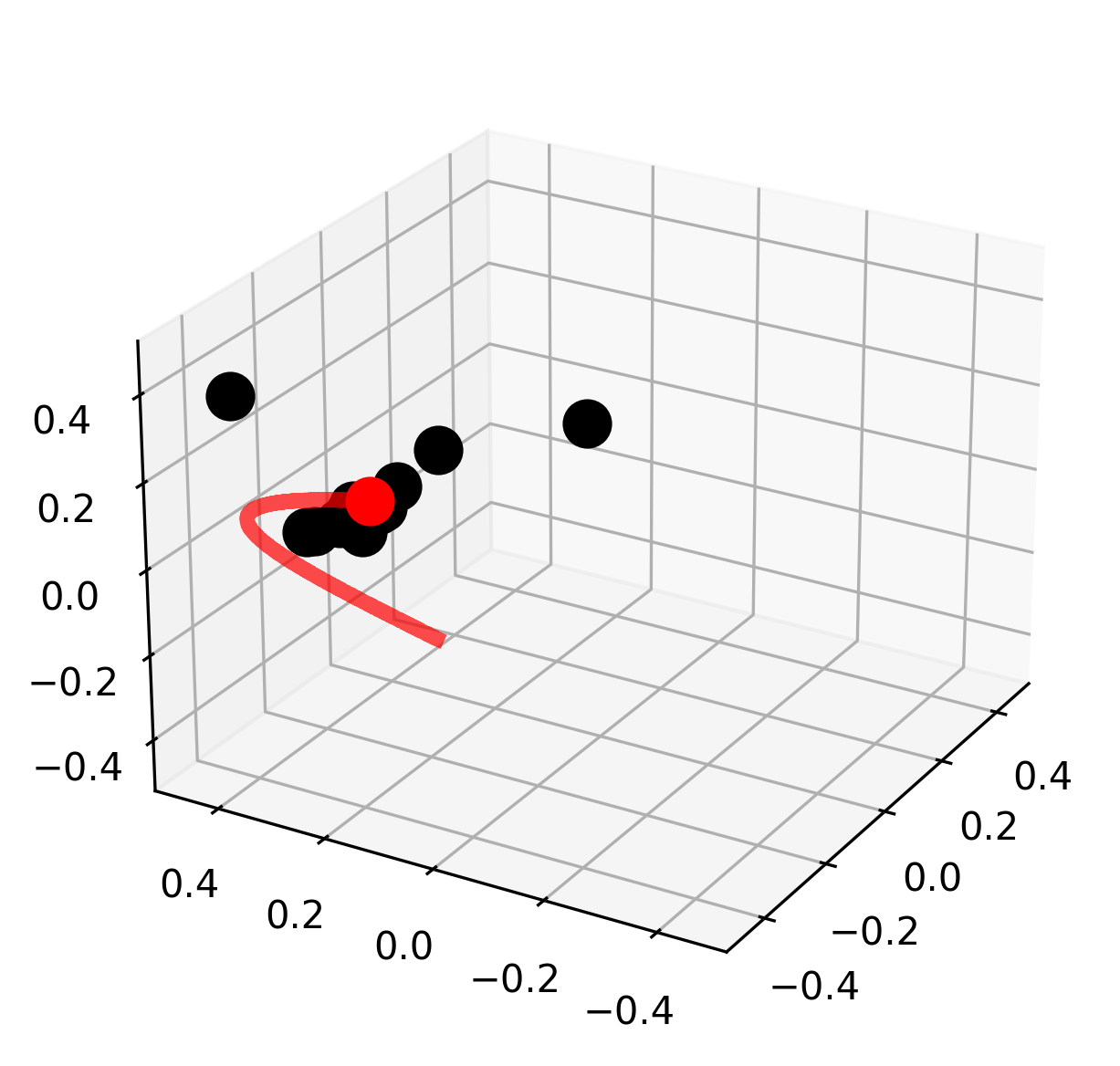}
    &
    \includegraphics[width=0.22\textwidth]{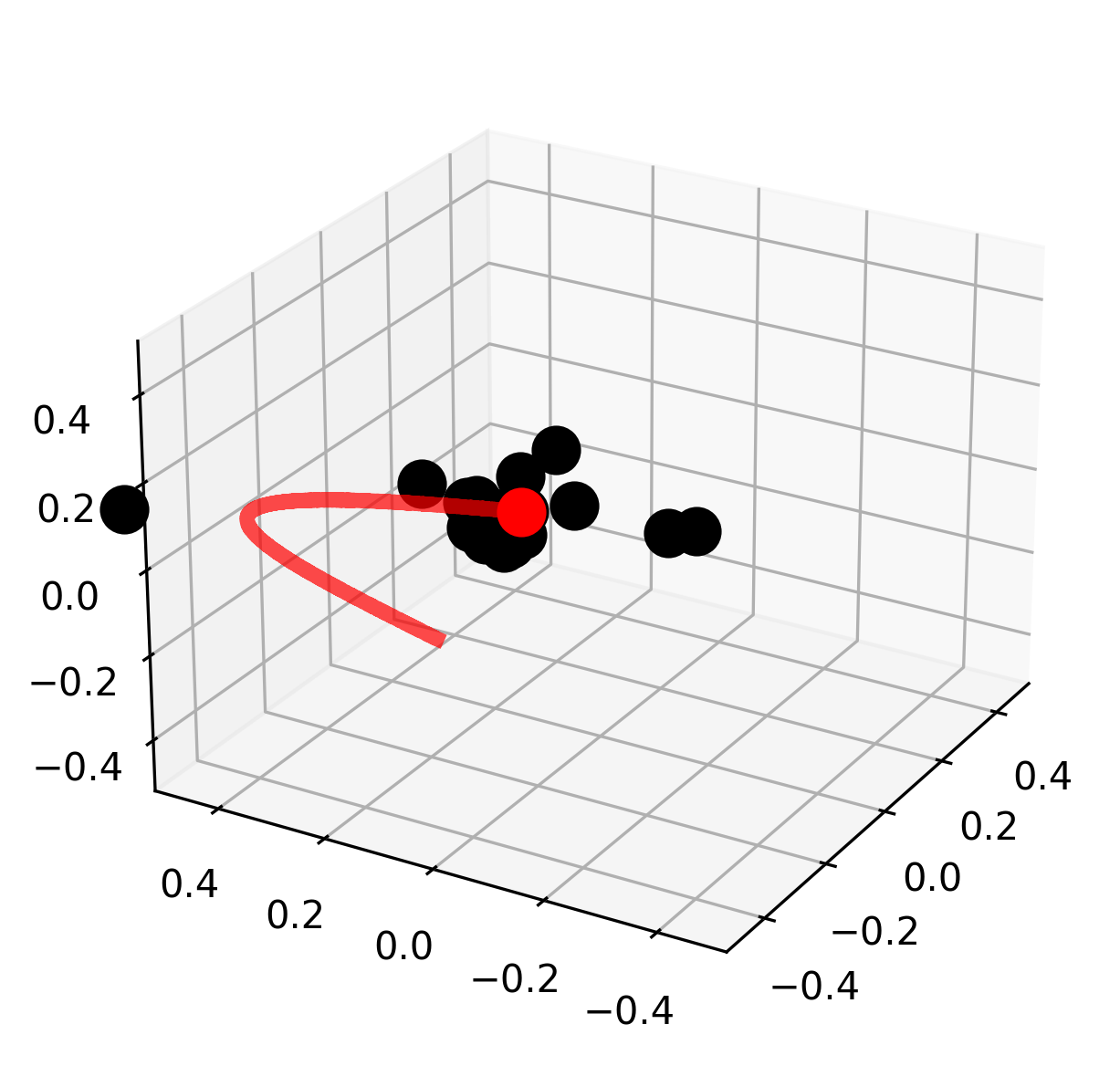}
    &
    \includegraphics[width=0.22\textwidth]{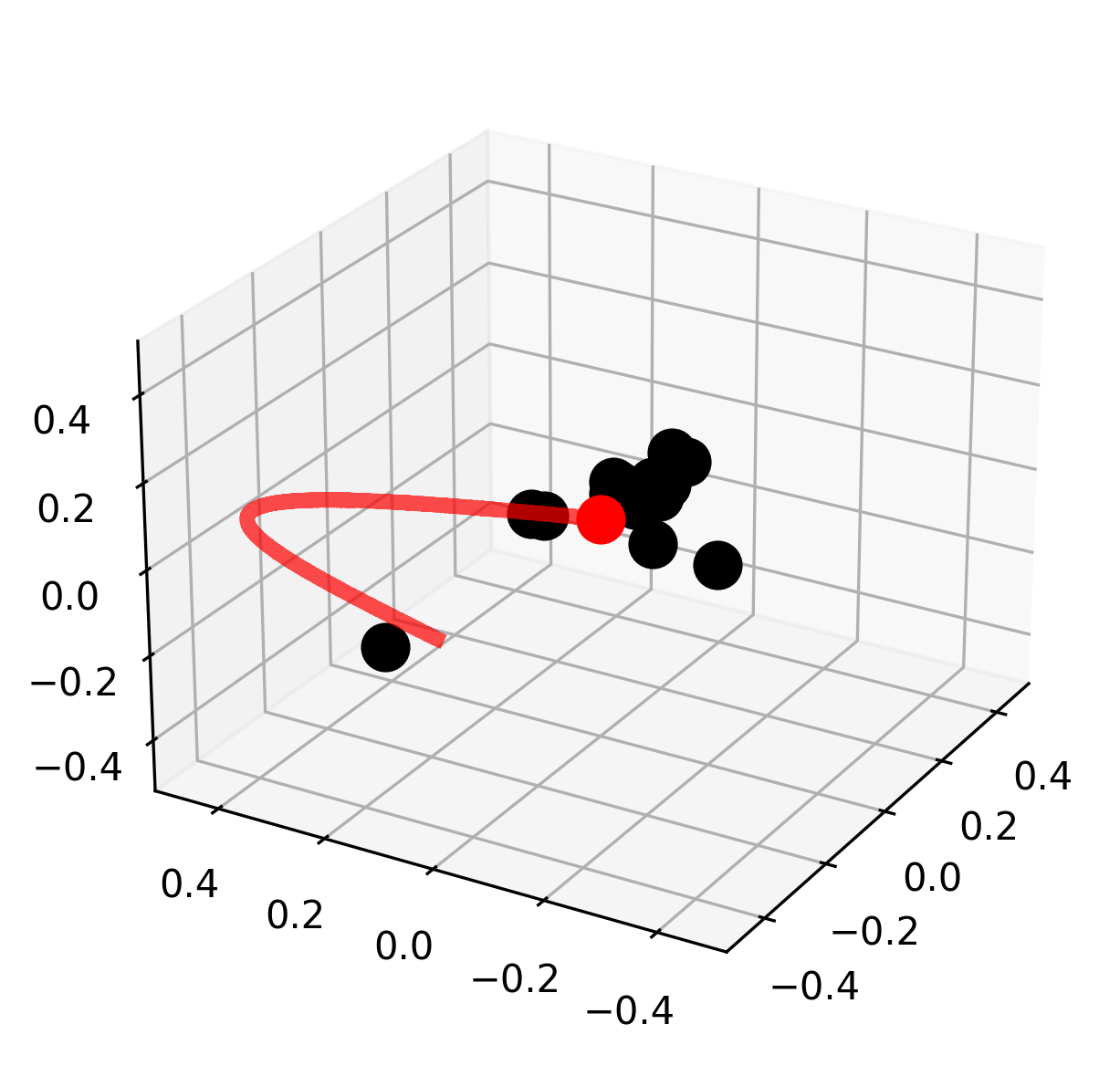}
    \end{tabular}
    \caption{Illustration of Swarm tracking with the acceleration model. Swarm of agents (black dots) and adversary (red dot). The agent swarm must maintain an MMD value \emph{less than} $2\epsilon$ while moving the least amount possible as shown in~\eqref{prob: CBF_tracking}.}
    \label{fig: swarm_tracking_3d}
\end{figure*}

\subsection{Examples}

Here we discuss applications of the MF-CBF framework in swarm avoidance and tracking examples.

\subsubsection{Swarm avoidance}

Suppose we wish to avoid (and maintain a certain distance from) an incoming object, denoted by $\rho^\dagger$. Mathematically, this condition can be formulated as
\begin{equation*}
    d(\rho(s,\cdot),\rho^\dagger(s,\cdot))\geq \epsilon, \quad \forall s \geq 0,
\end{equation*}
where $d$ is some distance function, and $\epsilon>0$. Although there are many choices for $d$, we consider the square maximum mean discrepancy (MMD) distance due to its analytic and computational simplicity\footnote{We note that other metrics such as Wasserstein distance may be considered.}. Hence, for a suitable choice of a symmetric positive-definite kernel $K$, we consider
\begin{equation}\label{eq:calH}
\begin{split}
    &\mathcal{H}(s,\rho)\\
    =&\frac{\text{MMD}(\rho,\rho^\dagger(s,\cdot))^2}{2}-\epsilon\\
    =&\frac{1}{2}\int_{\mathbb{R}^{2d}} K(x,y)(\rho(x)-\rho^\dagger(s,x))(\rho(y)-\rho^\dagger(s,y))dxdy-\epsilon.
\end{split}
\end{equation}
Next, we have that
\begin{equation}\label{eq:MMD_Frechet}
\begin{cases}
    \delta_\rho \frac{\text{MMD}^2(\rho,\rho^\dagger)}{2}=\int_{\mathbb{R}^d} K(x,y)(\rho(y)-\rho^\dagger(y))dy,\\
    \delta_{\rho^\dagger} \frac{\text{MMD}^2(\rho,\rho^\dagger)}{2}=\int_{\mathbb{R}^d} K(x,y)(\rho^\dagger(y)-\rho(y))dy.
\end{cases}
\end{equation}
Hence, we have that
\begin{equation}\label{eq:delta_rho_calH}
\begin{split}
    \delta_\rho \mathcal{H}(s,\rho)=\int_{\mathbb{R}^d} K(x,y)(\rho(y)-\rho^\dagger(s,y))dy,
\end{split}
\end{equation}
and
\begin{equation}\label{eq:partial_s_calH1}
\begin{split}
    \partial_s \mathcal{H}(s,\rho)=&\int_{\mathbb{R}^d} \delta_{\rho^\dagger} \left(\frac{\text{MMD}(\rho,\rho^\dagger(s,\cdot))^2}{2}-\epsilon \right) \partial_s \rho^\dagger(s,x)dx\\
    =&\int_{\mathbb{R}^{2d}} K(x,y)(\rho^\dagger(s,y)-\rho(y)) \partial_s \rho^\dagger(s,x)dxdy.
\end{split}
\end{equation}
Assuming $\rho^\dagger$ evolves according to the dynamics
\begin{equation}\label{eq:cont_eq_dagger}
    \partial_s \rho^\dagger(s,x)+\nabla \cdot (\rho^\dagger(s,x)v^\dagger(s,x))=0,
\end{equation}
we obtain that
\begin{equation}\label{eq:partial_s_calH}
\begin{split}
    &\partial_s \mathcal{H}(s,\rho)\\
    =&\int_{\mathbb{R}^{2d}} \nabla_x K(x,y)\cdot v^\dagger (s,x)(\rho^\dagger(s,y)-\rho(y)) \rho^\dagger(s,x)dxdy.
\end{split}
\end{equation}
Combining the derivations above, we find that
\begin{equation}\label{eq:K_CBF_MMD}
    \begin{split}
        &\mathcal{K}_{\text{CBF}}(s,\rho)= \bigg\{q:\\
        &\int_{\mathbb{R}^{2d}} \nabla_x K(x,y)\cdot f(s,x,q(s,x))(\rho(y)-\rho^\dagger(s,y))\rho(x)dxdy\\
        &+\int_{\mathbb{R}^{2d}} \nabla_x K(x,y)\cdot v^\dagger (s,x)(\rho^\dagger(s,y)-\rho(y)) \rho^\dagger(s,x)dxdy\\
        &+\alpha\left(\frac{\text{MMD}(\rho,\rho^\dagger(s,\cdot))^2}{2}-\epsilon\right) \geq 0 \bigg\}.
    \end{split}
\end{equation}

\begin{figure}[tb]
    \centering
    \begin{tabular}{cc}
    \textbf{a)} Swarm Avoidance
    &
    \textbf{b)} Swarm Tracking
    \\
    \includegraphics[width=0.22\textwidth]{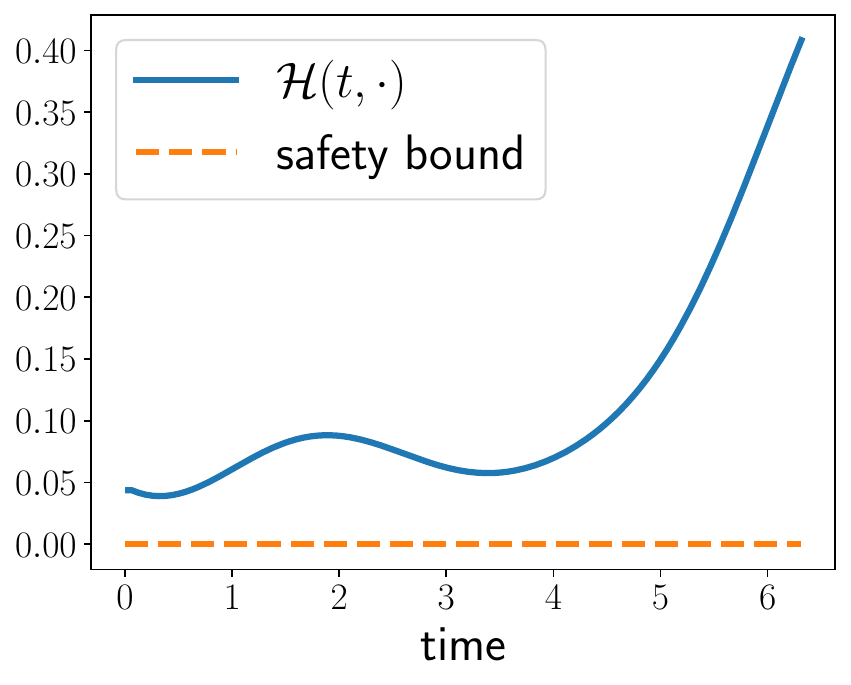}
    &
    \includegraphics[width=0.22\textwidth]{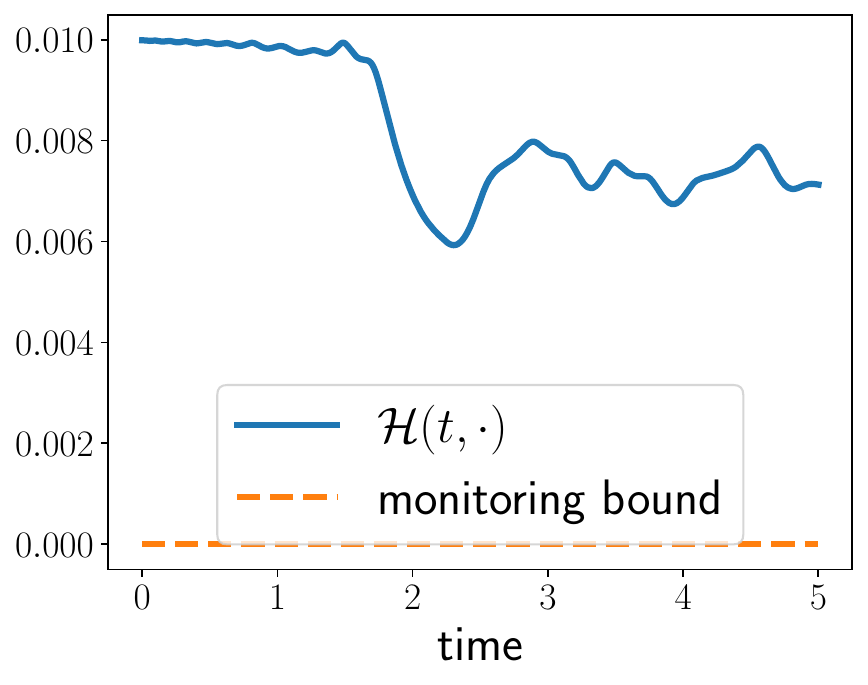}
    \end{tabular}
    \caption{Values of $\mathcal{H}$ over time (blue line) and monitoring bound that guarantees $\mathcal{H}(t,\rho) \geq 0$ (orange dashed line). In both examples, $\mathcal{H}(t,\rho) \geq 0$, which guarantees (a) $\frac{1}{2}\text{MMD}^2(\rho, \rho^\dagger) \geq \epsilon$ (safety of the swarm of agents from the $\rho^\dagger$ in red), and (b) $\frac{1}{2}\text{MMD}^2(\rho, \rho^\dagger) \leq \epsilon$ (proximity to $\rho^\dagger$ in red).}
    \label{fig: hval_history}
\end{figure}

\subsubsection{Swarm tracking}

The swarm avoidance framework in the previous section can be easily modified to a swarm tracking one by changing the sign of $\mathcal{H}$ in~\eqref{eq:calH}. Indeed, consider
\begin{equation}\label{eq:calH_tracking}
    \begin{split}
    &\mathcal{H}(s,\rho)\\
    =&\epsilon-\frac{\text{MMD}(\rho,\rho^\dagger(s,\cdot))^2}{2}\\
    =&\epsilon-\frac{1}{2}\int_{\mathbb{R}^{2d}} K(x,y)(\rho(x)-\rho^\dagger(s,x))(\rho(y)-\rho^\dagger(s,y))dxdy,
\end{split}
\end{equation}
where $\rho^\dagger$ is now the distribution the swarm that one wants to track. Then~\eqref{eq:calH>=0} would ensure that $\rho$ stays close to $\rho^\dagger$ all the time. Recycling the calculations for swarm avoidance yields
\begin{equation}\label{eq:K_CBF_MMD_tracking}
    \begin{split}
        &\mathcal{K}_{\text{CBF}}(s,\rho)= \bigg\{q:\\
        &\int_{\mathbb{R}^{2d}} \nabla_x K(x,y)\cdot f(s,x,q(s,x))(\rho(y)-\rho^\dagger(s,y))\rho(x)dxdy\\
        &+\int_{\mathbb{R}^{2d}} \nabla_x K(x,y)\cdot v^\dagger (s,x)(\rho^\dagger(s,y)-\rho(y)) \rho^\dagger(s,x)dxdy\\
        &-\alpha\left(\epsilon-\frac{\text{MMD}(\rho,\rho^\dagger(s,\cdot))^2}{2}\right)\leq 0 \bigg\}.
    \end{split}
\end{equation}

\section{Experiments}
\label{sec: experiments}

We illustrate the effectiveness of the MF-CBF framework on two types of applications: swarm avoidance and swarm tracking. 
We note that in practice we typically have access to \emph{samples} of $\rho$ in~\eqref{eq: MF_CBF}; consequently, we have a discrete approximation where $q$ and $q^\star$ are vectors and the objective is given by the Euclidean $l_2$ norm instead.
The dynamics used in both applications are double integrator dynamics, that is, 
$f(t,z,u) = Az + Bu$ where $z$ stacks the position and velocity of the agents. Our experiments are coded in python; in particular, the quadratic programs arising from the MF-CBFs are solved with cvxpy~\cite{diamond2016cvxpy}, an open source library for solving convex optimization problems. 
In both setups, the nominal controller is given by $q^* = 0$, i.e., we would like the swarm to have constant velocity (or remain still if they are already stationary).

\begin{comment}
We model both the controlled swarm and adversaries by empirical distributions; that is,
\begin{equation}\label{eq:empirical}
    \rho(s,\cdot)= \frac{1}{n} \sum_{i=1}^n \delta_{z_i(s)},\quad \rho^\dagger(s,\cdot)= \frac{1}{n} \sum_{j=1}^m \delta_{z^\dagger_j(s)}.
\end{equation}
Hence, we can model distributed controls as
\begin{equation}\label{eq:empirical_controls}
\begin{split}
    &(q_1(s),q_2(s),\cdots,q_n(s))\\
    =&(q(s,z_1(s)),q(s,z_2(s)),\cdots,q(s,z_n(s)))
\end{split}
\end{equation}    
\end{comment}

\subsection{Swarm Avoidance}
In these experiments, we suppose we have a swarm of agents that are stationary, and as soon as a moving object gets too close, the swarm of agents avoid the object; this is akin to a swarm of fish avoiding an incoming shark. 
The MF-CBF problem to be solved at each time step is then given by 
\begin{equation}
    \min_q \|q\|_2 \quad \text{s.t.} \quad q \in \mathcal{K}_{\text{CBF}} \quad \text{ in } \quad \eqref{eq:K_CBF_MMD} 
    \label{prob: CBF_avoidance}
\end{equation}

\begin{comment}
\begin{equation}
    \min_u \|u\|_2 \quad \text{s.t.} \quad v \in \mathcal{K}_{\text{CBF}} \quad \text{ in } \quad \eqref{eq:K_CBF_MMD} 
    \label{prob: CBF_avoidance}
\end{equation}    
\end{comment}

Figure~\ref{fig: swarm_avoidance_3d} shows the effectiveness of the MF-CBF in the swarm avoidance setting.
In this experiment, the red dot is a moving obstacle has a constant velocity. 
Guaranteed safety is shown in Fig.~\ref{fig: hval_history}, where $\mathcal{H}(\rho(s, \cdot)) \geq 0$ for all $s$, which guarantees that $\frac{1}{2}\text{MMD}^2(\rho, \rho^\dagger) \geq \epsilon$.

\subsection{Swarm Tracking}
In this experiments, we suppose we have a swarm of agents that must maintain a certain distance from the red agent. In particular, they must maintain an MMD squared value \emph{less than} $2\epsilon$. 
The MF-CBF problem to be solved at each time step is then given by 
\begin{equation}
    \min_q \|q\|_2 \quad \text{s.t.} \quad q \in \mathcal{K}_{\text{CBF}} \quad \text{ in } \quad \eqref{eq:K_CBF_MMD_tracking} 
    \label{prob: CBF_tracking}
\end{equation}

\begin{comment}
\begin{equation}
    \min_u \|u\|_2 \quad \text{s.t.} \quad v \in \mathcal{K}_{\text{CBF}} \quad \text{ in } \quad \eqref{eq:K_CBF_MMD_tracking} 
    \label{prob: CBF_tracking}
\end{equation}    
\end{comment}

As before, Figure~\ref{fig: swarm_tracking_3d} shows the effectiveness of the mean-field CBF in the swarm tracking. In this experiment, the red agent has a constant velocity.  
Feasibility of the tracking distance is shown in Fig.~\ref{fig: hval_history}, where $\mathcal{H}(\rho(s, \cdot)) \geq 0$ for all $s$, which guarantees that $\frac{1}{2}\text{MMD}^2(\rho, \rho^\dagger) \leq \epsilon$.

\subsection{Discussion}
Just like traditional CBFs, there are two major considerations when employing MF-CBFs: the choice of $\alpha(x)$ and the necessary time discretization, both of which are nuanced tasks and context-dependent~\cite{ames2019control}. In our experiments, these were hyperparameters that we tuned until we obtained the desired performance; for instance, the swarm tracking problems required a much finer time-discretization since the projection onto the set of controls in~\eqref{eq:MFCBF_inclusion} was activated more frequently. 
Finally, we remark that since we have a finite number of agents, the distributions are comprised of Dirac-delta functions so that the norm $\|\cdot\|_2$ in~\eqref{prob: CBF_avoidance} and~\eqref{prob: CBF_tracking} are equivalent to $\| \cdot \|_{L^2(\rho(s,\cdot))}$ in~\eqref{eq: MF_CBF}.
Code details and accompanying videos can be found in \url{https://github.com/mines-opt-ml/mean-field-cbf}.

\section{Conclusion}
We present MF-CBF, a mean-field framework for real-time swarm control. The core idea is to extend CBFs to the space of distributions. Our numerical experiments show MF-CBFs are effective in a swarm avoidance and a swarm tracking example. Future work involves employing these in optimal control settings where the feedback control is available~\cite{onken2020neural,onken2022neural, onken2021ot, vidal2023taming} and improving their computational efficiency via kernel decoupling techniques~\cite{nurbekyan2019fourier, chow2022numerical, agrawal2022random, liu2021computational, vidal2024kernel}.

% \section*{Acknowledgment}

% \section*{References}

% \newpage
\bibliographystyle{ieeetr}
\bibliography{refs}

\end{document}